\documentclass[11pt,a4paper,twoside,reqno]{amsart}
\usepackage[utf8]{inputenc}
\usepackage[margin=0.9in]{geometry} % Required for inputting international characters
\usepackage{amsthm}
\usepackage{amsmath}
\usepackage{amsfonts}
\usepackage{amssymb}
\usepackage{url}
\usepackage{hyperref}
\usepackage[makeroom]{cancel}
\usepackage{enumerate}
\usepackage{mathrsfs}
\usepackage{xcolor}
\usepackage{dsfont}
\usepackage{mathtools}
\mathtoolsset{showonlyrefs}

\newtheorem{thm}{Theorem}%[section]

\newtheorem{lem}{Lemma}[section]

\newcommand{\supp}{\operatorname{supp}}

\counterwithin{Theorem}{section}
\numberwithin{equation}{section}

\allowdisplaybreaks

\title{Counterexamples to the inhomogeneous Duffin--Schaeffer Conjecture}
\author{Manuel Hauke-Treuer, James Maynard, Andrew Pollington}
\begin{document}

\newcommand{\Addresses}{{
  \bigskip
  \footnotesize

  Manuel Hauke-Treuer, \textsc{Institute of Analysis and Number Theory, TU Graz},\par 
  \textit{E-mail address:} \texttt{hauke@math.tugraz.at}\vspace{5mm}

  James Maynard, \textsc{Mathematical Institute, University of Oxford},\par
   \textit{E-mail address:} \texttt{james.alexander.maynard@gmail.com}\vspace{5mm}
   
  Andrew Pollington\par
  \textit{E-mail address:} \texttt{adpoll@yahoo.com} 
  }
  }

	\begin{abstract}
		The Duffin--Schaeffer Conjecture, proven in 2020, gives a zero-full dichotomy for the approximation of irrational numbers with reduced fractions for arbitrary approximation functions. We construct approximation functions such that the inhomogeneous analogue of the Duffin--Schaeffer Conjecture fails.
		The counterexamples are obtained for all non-zero rationals and certain Liouville numbers.
	\end{abstract}
	\maketitle
\section{Introduction and statement of results}
    
	Khintchine's Theorem \cite{k24,k26}	gives a zero-full dichotomy on the set of $\psi$-approximable numbers:
	Writing 
	
	\[W(\psi) := \left\{\alpha \in [0,1): \left\lvert \alpha - \frac{a}{q}\right\rvert \leq \frac{\psi(q)}{q} \quad \text{for infinitely many } (a,q) \in \mathbb{Z} \times \mathbb{N}\right\},\]
	we have for every monotonic approximation function $\psi: \mathbb{N} \to [0,\infty)$ that
	
	\[\lambda(W(\psi)) = \begin{cases} 0 \quad &\text{ if } \sum_{q \in \mathbb{N}} \psi(q) < \infty, \\ 
		1 &\text{ if } \sum_{q \in \mathbb{N}} \psi(q) = \infty,
	\end{cases}\]
	where $\lambda$ denotes the Lebesgue measure.\\
	
	Duffin and Schaeffer \cite{DS41} showed that the monotonicity assumption in Khintchine's Theorem is necessary. Their counterexamples exploit the non-uniqueness of non-reduced representations of rational numbers, which led them to conjecture that the theorem becomes true when considering only reduced fractions and adapts the divergence condition accordingly. This was confirmed by Koukoulopoulos and the second-named author \cite{KM}:
	Writing  
	\[W'(\psi) := \left\{\alpha \in [0,1): \left\lvert \alpha - \frac{a}{q}\right\rvert \leq \frac{\psi(q)}{q} \quad \text{for infinitely many } (a,q) \in \mathbb{Z} \times \mathbb{N}, \gcd(a,q) = 1\right\},\]
	we have for arbitrary (not necessarily monotonic) $\psi: \mathbb{N} \to [0,\infty)$
	\[\lambda(W'(\psi)) = \begin{cases} 0 \quad&\text{ if } \sum_{q \in \mathbb{N}} \frac{\psi(q)\varphi(q)}{q} < \infty, \\ 
		1 &\text{ if } \sum_{q \in \mathbb{N}} \frac{\psi(q)\varphi(q)}{q} = \infty,\end{cases}\]
        where $\varphi$ denotes the Euler totient function.\\
        
	In the inhomogeneous setup, one fixes a shift $\gamma \in \mathbb{R}$ and asks for 
	
	\[\left\lvert \alpha - \frac{a + \gamma}{q}\right\rvert \leq \frac{\psi(q)}{q} \quad \text{for infinitely many } (a,q) \in \mathbb{Z} \times \mathbb{N}\]
	instead. The generalization of Khintchine's Theorem due to Szüsz \cite{szusz} shows that the set
	
	\[W(\psi,\gamma) := \left\{\alpha \in [0,1): \left\lvert \alpha - \frac{a + \gamma}{q}\right\rvert \leq \frac{\psi(q)}{q} \quad\text{for infinitely many } (a,q) \in \mathbb{Z} \times \mathbb{N}\right\}\]
	satisfies
	
	\[\lambda(W(\psi,\gamma)) = \begin{cases} 0 \textit{ if } \sum_{q \in \mathbb{N}} \psi(q) < \infty, \\ 
		1 \textit{ if } \sum_{q \in \mathbb{N}} \psi(q) = \infty,
	\end{cases}\]
	again subject to $\psi: \mathbb{N} \to [0,\infty)$ being monotonically decreasing\footnote{Khintchine and Szüsz assumed the stronger condition of $q \mapsto q\psi(q)$ to be decreasing; the weaker monotonicity assumption on $\psi$ is due to Schmidt \cite{S64}.}.\\
	
	Ram\'{i}rez \cite{R17} extended the Duffin--Schaeffer counterexamples to the inhomogeneous setup, showing that for every fixed $\gamma \in \mathbb{R}$, there exists a non-monotonic $\psi: \mathbb{N} \to [0,\infty)$ such that $\sum_{q \in \mathbb{N}} \psi(q) = \infty$, while $\lambda(W(\psi,\gamma)) = 0$. Motivated by the homogeneous conjecture, he suggested the direct analogue, which he called the "Inhomogeneous Duffin--Schaeffer Conjecture". Since then, the conjecture and related questions concerning non-monotonic inhomogeneous approximation have been studied in several works, see e.g. \cite{BHV24, CT24, HR24, Yu} and the references therein.\\
	
	We show that the conjecture fails for every non-zero rational $\gamma$, and extend the construction to irrationals that are sufficiently well approximable by rational numbers:
	
	\begin{thm}\label{thm_counter}
	Let $\gamma \neq 0$ satisfy 
	\begin{equation}\label{dioph_cond}\left\lvert \gamma - \frac{r}{s}\right\rvert < \exp(-s^{12}) \text{ for infinitely many } (r,s) \in \mathbb{Z} \times \mathbb{N}.\end{equation}
	Then writing
	\[W'(\psi,\gamma) := \left\{\alpha \in [0,1): \left\lvert \alpha - \frac{a + \gamma}{q}\right\rvert \leq \frac{\psi(q)}{q} \quad\text{for infinitely many } (a,q) \in \mathbb{Z} \times \mathbb{N}, \gcd(a,q) = 1\right\},\]
	 there exists $\psi: \mathbb{N} \to [0,\infty)$ with 
		$\sum_{q \in \mathbb{N}} \frac{\psi(q)\varphi(q)}{q} = \infty,$ but $\lambda(W'(\psi,\gamma)) = 0$.
	\end{thm}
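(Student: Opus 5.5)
The plan is the classical Duffin--Schaeffer block construction, adapted so that the inhomogeneous shift, rather than non-reducedness, produces the overlaps. I describe it first for $\gamma=r/s$ rational with $\gcd(r,s)=1$, and then transfer it to $\gamma$ satisfying \eqref{dioph_cond}. The coprimality condition $\gcd(a,q)=1$ only constrains $a$ modulo $q$. It does \emph{not} make the rational $(a+\gamma)/q=(sa+r)/(sq)$ reduced, and points coming from different $q$ can coincide. The concrete mechanism: fix a large squarefree modulus $Q$ and let $q=Q/k$ with $k\mid Q$. Then
\[
\frac{a+\gamma}{q}=\frac{ka+k\gamma}{Q}=\frac{b+\gamma}{Q},\qquad b=ka+(k-1)\gamma .
\]
This $b$ is an integer whenever $s\mid k-1$. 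So all the points $(a+\gamma)/q$ with $q\mid Q$, $q=Q/k$ and $k\equiv 1 \pmod s$, lie on the single shifted grid $\{(b+\gamma)/Q\}$. The point for such a $q$ lies on the residue class $b\equiv -r\bar s\pmod k$, intersected with the conditions coming from $\gcd(a,Q/k)=1$.

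Block $j$ consists of a modulus $Q_j$ (a product of many primes $\equiv 1 \pmod s$, with the $Q_j$ growing so fast that blocks use disjoint sets of $q$), a set $K_j$ of divisors $k$, and values $\psi(Q_j/k)=\delta_{j,k}\,(Q_j/k)/Q_j$ for $k\in K_j$, with $\psi=0$ elsewhere. The set of $\alpha$ approximated inside block $j$ is then covered by intervals of radius $\max_k\delta_{j,k}/Q_j$ around those grid points $(b+\gamma)/Q_j$ that are actually hit. Its measure $\mu_j$ is therefore $2\max\delta_{j,k}$ times the density of the hit residues $b$. The block's contribution to the divergent series is
\[
\Sigma_j=\sum_{k\in K_j}\frac{\delta_{j,k}}{k}\prod_{p\mid Q_j,\ p\nmid k}\Bigl(1-\frac1p\Bigr).
\]
The aim is to choose the data so that $\sum_j\Sigma_j=\infty$ while $\sum_j\mu_j<\infty$. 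Borel--Cantelli then gives $\lambda(W'(\psi,\gamma))=0$. To get a gain I would let the weights $\delta_{j,k}$ grow with the number of prime factors of $k$. I would also exploit that the residue classes $b\equiv -r\bar s \pmod k$ are all compatible by CRT (they contain the common class $b\equiv -r\bar s\pmod {Q_j}$). The hope is that the hit sets nest heavily, so that the measure is governed by the densest few classes while the series picks up every $k$.

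For irrational $\gamma$ satisfying \eqref{dioph_cond} I would run block $j$ with $r_j/s_j$ from \eqref{dioph_cond} and $Q_j$ of size about $\exp(s_j^{C})$ with $C<12$. The replacement $\gamma\to r_j/s_j$ moves every point by at most $k|\gamma-r_j/s_j|/Q_j\le Q_j\exp(-s_j^{12})/Q_j$. This is negligible against the interval radii, so one can enlarge the radii by a factor $2$ and apply the rational argument block by block. The sign condition $\gamma\ne 0$ enters because for $r=0$ the hit classes lose their nontrivial structure and the construction degenerates to the homogeneous case, where the theorem of \cite{KM} forbids a counterexample.

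The main obstacle is making $\Sigma_j/\mu_j$ unbounded. A first computation warns that it is delicate. With uniform weights over all divisors, the coprimality conditions make the hit sets for different $k$ essentially disjoint: the $b$ hit by $k$ have ``divisibility pattern'' exactly the primes of $k$. By the identity $\sum_{k\mid Q}\frac1k\prod_{p\nmid k}(1-\frac1p)=1$, the ratio is then exactly bounded. So the gain must come from a genuinely inhomogeneous effect. Here is what I would try first. Compare points $(a+\gamma)/q$ and $(a'+\gamma)/q'$ for $q,q'$ \emph{not} dividing a common grid modulus but with $q'/q$ close to a fraction $u/v$ with $u\equiv v\pmod s$. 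Such points are close rather than equal, and with varying radii this should let many $q$ of very different sizes share a small union. If that fails, I would look for an averaging argument over $k$ of the quantities $\gcd(sa+r,\,\cdot\,)$ to find the right weighting.
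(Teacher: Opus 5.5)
There is a genuine gap. You never exhibit a block with $\Sigma_j/\mu_j\to\infty$: you flag this as the main obstacle and only list things to try. Moreover, the computation that made you abandon uniform weights is wrong.

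Take the prime factors of $Q$ coprime to $r$ as well, and put $c\equiv -r\bar s\pmod Q$. Then $q=Q/k$ hits the grid point $b$ iff $b\equiv c\pmod k$ and, for each $p\mid Q/k$, $a\not\equiv 0\pmod p$, i.e.\ $b\not\equiv (1-k)c\pmod p$. The excluded class is $(1-k)c$, not $c$, and since $p\nmid kr$ these differ unless $r=0$. So the hit sets are far from disjoint. With $D=\gcd(b-c,Q)$, $b$ is hit by every $k\mid D$ such that $b-c\not\equiv -kc\pmod p$ for all $p\mid Q/D$.

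Your earlier intuition that the hit sets nest was right. In fact the uniform choice $\delta_{j,k}=\tfrac12$ for all $k\mid Q$ already gives a counterexample block, and this is the paper's construction (for $\gamma=1$ it is literally $\psi(q)=q/(2\Pi)$ for all $q\mid\Pi$, $q>1$). Write $L=\sum_{p\mid Q}1/p$ and split the hit $b$ according to $\omega(D)$.
\begin{itemize}
\item Those with $\omega(D)\ge1.01L$ number at most $Q\exp((e^{\alpha}-1-1.01\alpha)L)$, by Rankin's trick with $\alpha=1/100$, ignoring the sieve.
\item Those with $\omega(D)<1.01L$ have at most $2^{1.01L}$ candidates $k\mid D$. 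For each one, $b-c$ must avoid the two \emph{distinct} classes $0$ and $-kc$ modulo every $p\mid Q/D$. By CRT and $\sum_{D\mid Q}\prod_{p\mid Q/D}(p-2)=\varphi(Q)$, there are at most $2^{1.01L}\varphi(Q)\le Q\exp((1.01\log2-1)L)$ of them.
\end{itemize}
Hence the union has density $\ll e^{-\eta L}$ for an absolute $\eta>0$, while the block still contributes $\tfrac12+O(1/Q)$ to the series. This is the missing idea. A typical grid point has only about $2^{L}$ candidate representations, each surviving the coprimality sieve with probability about $e^{-L}$, so typically none survives. The average multiplicity is carried by the rare $b$ with many prime factors.

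The transfer to irrational $\gamma$ also fails as set up. For a fixed rational $\gamma$, your restriction to primes $p\equiv1\pmod s$ is harmless (Dirichlet lets $L\to\infty$) and gives a clean single grid $(b+\gamma)/Q$. But if $s_j\to\infty$ and $Q_j\le\exp(s_j^{C})$, then $Q_j$ has at most $2s_j^{C}$ prime factors, all of the form $ms_j+1$. So $L_j\le\frac{1}{s_j}\sum_{m\le 2s_j^{C}}\frac1m\ll\frac{C\log s_j}{s_j}\to0$, and there is no gain at all. Even without the congruence restriction, $L_j\le\log\log s_j+O_C(1)$, whereas the paper needs $L\ge\rho^{-1}\log s$.

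What has to be small is not $Q$ but $k=Q/q$. The replacement $\gamma\to r/s$ moves $(a+\gamma)/q$ by $|\gamma-r/s|/q$, while the radius is $\psi(q)/q$ with $\psi(q)=\delta/k$. The paper therefore does the following.
\begin{itemize}
\item It uses all primes $p\nmid rs$ in $(X,p_k]$ with $s\asymp e^{\rho L}$, so $\Pi$ is doubly exponential in $s^{1/\rho+o(1)}$.
\item It keeps only those $q\mid\Pi$ with $\omega(\Pi/q)<cL$. Then $\Pi/q\le p_k^{cL}=\exp(s^{1/\rho+o(1)})$, so $\psi(q)\ge\exp(-s^{12})$.
\item The same Rankin bound shows the truncation removes at most $\exp((e^{\alpha}-1-c\alpha)L)$ from the series.
\end{itemize}
Working on the grid $b/(s\Pi)$ costs a factor $s\le e^{\rho L}$ in the large-$\omega$ count. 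This is absorbed because any $\rho\in(1/12,\,0.086)$ is admissible, and that is where the exponent $12$ comes from.
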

	\vspace{1cm}
    
Some remarks concerning the above theorem are in order:
	
	\begin{enumerate}
		\item We do not expect the precise Diophantine assumption in \eqref{dioph_cond} to be sharp, and some optimization could potentially weaken it to some degree. However, it is clear that our approach is restricted to exceptionally well-approximable $\gamma$. The behaviour for arbitrary, in particular non-Liouville $\gamma$, remains widely open. It is not clear whether the Diophantine properties of the shift play a decisive role. In related inhomogeneous Khintchine-type problems, positive results of Yu \cite{Yu} hold under suitable Diophantine assumptions on the shift, including all non-Liouville irrationals, whereas due to \cite{CHPR}, the result breaks down for sufficiently well-approximable shifts.
		\item The role of the coprimality condition in the inhomogeneous setup no longer has the same effect of making fractions unique, which plays a crucial role in the counterexamples we construct --- see Section \ref{sec_heur} for a detailed discussion and a heuristic explanation of the underlying principle.
        Instead of classical coprimality, one may impose a reduction condition depending on $\gamma$, usually referred to as ``shift-reduced''. Such a result was already obtained by the first-named author with Beresnevich--Velani \cite[Theorem 14]{BHV24} in the rational setting: For $\gamma = \frac{A}{B} \in \mathbb{Q}, \gcd(A,B) = 1$, and defining the shift-reduced sets
        
		\[W_s'\left(\psi,\frac{A}{B}\right) := \left\{\alpha \in [0,1): \left\lvert \alpha - \frac{a + \frac{A}{B}}{q}\right\rvert \leq \frac{\psi(q)}{q} \quad \text{for i.m. } (a,q) \in \mathbb{Z} \times \mathbb{N}, \gcd(aB + A,q) = 1 \right\},\]
		we have for arbitrary $\psi: \mathbb{N} \to [0,\infty)$ that
		\[\lambda\left(W_s'\left(\psi,\frac{A}{B}\right)\right) = \begin{cases} 0 \quad&\text{ if } \sum_{q \in \mathbb{N}} \frac{\psi(q)\varphi(q)}{q} < \infty, \\ 
			1 &\text{ if } \sum_{q \in \mathbb{N}} \frac{\psi(q)\varphi(q)}{q} = \infty.\end{cases}\]
            Note that this recovers as special case the homogeneous case $\gamma = 0$ from \cite{KM}, and extends to all rational numbers. Again, the behaviour for irrational $\gamma$ remains open. We remark that the sets $W_s'(\psi,\frac{A}{B})$ are invariant modulo $1$ in the second argument. While this seems a natural condition, we note that Theorem \ref{thm_counter} also applies to integer shifts $\gamma \in \mathbb{Z} \setminus \{0\}$.
		\item The situation is different if one additionally randomizes the shift parameter, or considers simultaneous approximation or approximation by linear forms. In all these cases, the generalization of the Khintchine--Szüsz Theorem to the respective setups is known to hold without any monotonicity assumption \cite{ahr,ar,bv,cassels,h_quant,kim}.
		\item While finalizing this draft, we were informed by Lingmin Liao and Yubin He that they constructed independently, with the help of AI, counterexamples to the inhomogeneous Duffin--Schaeffer Conjecture. Their result also covers all non-zero rational shifts and some exceptional irrational shifts. The counterexample construction is essentially the same, while some details of the proofs differ.
	\end{enumerate}
	
	\subsection*{Acknowledgments}
	MHT was funded by the Austrian Science Fund (FWF) project\\ 10.55776/ESP5134624. JM is supported by the European Research Council (ERC) under the European Union’s Horizon programme (grant agreement No 101230920). MHT would like to thank Victor Beresnevich for discussions concerning potential counterexamples, and the University of Oxford for their hospitality while he was visiting JM. AP thanks the University of Oxford for their hospitality while visiting JM. The authors would like to thank Lingmin Liao and Yubin He for sharing a previous version of their article with us. 

\section{Heuristic overview of the proof}\label{sec_heur}

Let us first recall the original counterexample of Duffin and Schaeffer \cite{DS41} to a putative extension of Khintchine's Theorem without the monotonicity assumption. Their key idea is that since we can trivially have a single rational number represented as $a/q$ for many different $q$'s (since $1/2=2/4=3/6=\dots$), we can engineer significant overlaps in the set of numbers approximated by rationals with denominator $q$.

Let $\Pi:= \Pi_x := \prod_{x\le p\le e^x}p$ be a product of many consecutive primes of some scale $x$, and $\psi(q):= \varepsilon q/\Pi$ when $q|\Pi$, and $\psi(q)=0$ otherwise (here $\varepsilon>0$ is a fixed constant depending only on the scale $x$). Then it is easy to see that
\[
E_q:=\Bigl\{\alpha\in [0,1):\,\Bigl|\alpha-\frac{a}{q}\Bigr|\le \frac{\psi(q)}{q}\Bigr\}
\]
has measure $\lambda(E_q)=\varepsilon q/\Pi$ for each $q|\Pi$. Thus we see that
\[
\sum_{q}\lambda(E_q)=\frac{\varepsilon}{\Pi} \sum_{q|\Pi}q=\varepsilon \prod_{p|\Pi}\Bigl(1+\frac{1}{p}\Bigr).
\]
On the other hand, since every set $E_q$ is a collection of small intervals about rationals of the form $a/q$ with $q|\Pi$, we see that
\[
\bigcup_q E_q\subseteq \bigcup_{b\pmod{\Pi}}\Bigl[\frac{b}{\Pi}-\frac{\varepsilon}{\Pi},\frac{b}{\Pi}+\frac{\varepsilon}{\Pi}\Bigr],
\]
so $\lambda(\cup E_q)\le 2\varepsilon$. Thus the measure of the union $\lambda(\cup E_q)$ can be arbitrarily small compared with the sum $\sum_q \lambda(E_q)$ of the individual measures if $x$ is large enough, since $\prod_{p|\Pi}(1+1/p)\approx \log{2^x}/\log{x}$. By combining an infinite collection of different scales together, this leads to the counterexample. The key feature making this work was that a rational $b/\Pi$ could typically be written as $a/q$ with $q|\Pi$ in many different ways so there were significant overlaps between the $E_q$ for $q|\Pi$; the average number of ways was $\prod_{p|\Pi}(1+1/p)$, which could become arbitrarily large.

If, as Duffin and Schaeffer proposed, we make a restriction in $E_q$ to only considering reduced fractions $a/q$ then this example completely breaks down, since every rational $b/\Pi$ would then have a unique representation as $a/q$ with $(a,q)=1$.

Now let us consider the inhomogeneous situation, with $\gamma=1$ for simplicity. Keeping the same setup, we would be interested in the number of representations of $b/\Pi$ as $(a+1)/q$ with $q|\Pi$ and $(a,q)=1$. We have seen that the average number of representations of $b/\Pi$ as $(a+1)/q$ is $\prod_{p|\Pi}(1+1/p)$, and random heuristics might lead us to guess any such representative to satisfy $(a,q)=1$ with `probability' $\prod_{p|q}(1-1/p)$. A quick calculation reveals that on average we would then expect $b/\Pi$ to have exactly $1$ representative as $(a+1)/q$ with $(a,q)=1$; the restriction $(a,q)=1$ would hold with a probability that perfectly counteracts the many representations. This appears to mean that this counterexample does not apply to the homogeneous situation.

However, the exact number of representations $b/\Pi$ has of the form $(a+1)/q$ is the number of common divisors of $b$ and $\Pi$. Although on \textit{average} $b$ has $\prod_{p|\Pi}(1+1/p)$ such divisors, \textit{most} choices of $b$ have many fewer representations (namely about $2^{\sum_{p|\Pi}1/p}$), and the average is actually dominated by a small number of choices of $b$ which have many prime factors. This is analogous to the well-known fact from the anatomy of integers that a \textit{typical} integer of size $y$ (i.e. for all elements of a subset of $\{1,\dots,y\}$ of size $y-o(y)$) has roughly $\log\log{y}$ prime factors and so about $2^{\log\log{y}}=(\log{y})^{\log{2}}$ divisors, but the \textit{average} number of divisors of integers of size $y$ is roughly $\log{y}$, which is much larger (because the average is dominated by the contribution of a set of size $o(y)$ which have unusually many divisors).

The outcome of this observation is that a \textit{typical} choice of $b/\Pi$ actually has \textit{no} representations as $(a+1)/q$ with $q|\Pi$ and $(a,q)=1$, since it typically only has a moderate number of representatitves as  $(a+1)/q$, and it is unlikely that any of these satisfy $(a,q)=1$. Once we have restricted to those $b$ which do have a representation, they actually have \textit{many} representations on average. Since we have found a set of rationals which typically have many representations, we can now derive the desired contradiction in a similar manner to before.

The case of more general shifts $\gamma$ is a natural generalization of the case when $\gamma=1$. When $\gamma=r/s$ essentially the same argument applies directly, with mild dependencies on the height $\max(r,s)$ of $\gamma$. 

\section{Proof of the Theorem}

\subsection*{Notation}
We use the Vinogradov notations $\ll,\gg$, as well as denote with $\varphi$ the Euler totient function, and with $\omega$ the number of distinct prime divisors. Sums and products over $p$ are understood as sums respectively products over prime numbers.\\

We use a standard key auxiliary statement from which the proof of the Theorem then concludes immediately.

\begin{lem}\label{obv_key_lem}
We define 
\[A_q(\psi,\gamma) := 
\left\{\alpha \in [0,1): \exists a \in \mathbb{Z}: \gcd(a,q) = 1, \left\lvert \alpha - \frac{a + \gamma}{q}\right\rvert \leq \frac{\psi(q)}{q}\right\}\]
and assume $\gamma \neq 0$ to satisfy \eqref{dioph_cond}.
Then for every $\varepsilon > 0$ and all $X$ sufficiently large, there exists $Y>X$ and $\psi_X: \mathbb{N} \to [0,\infty)$ with support in $[X,Y]$ such that 
\[\sum_{q \in [X,Y]} \frac{\psi_X(q)\varphi(q)}{q} \geq \frac{1}{3},\]
but 
\[\lambda\left(\bigcup_{q \in [X,Y]}A_q(\psi_X,\gamma)\right) < \varepsilon.\]
\end{lem}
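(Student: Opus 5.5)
Take $\gamma = r/s$ rational first; the general case then follows by perturbation using \eqref{dioph_cond}. Fix a large $x$ and let $\Pi = \prod_{p \in P} p$, where $P$ is the set of primes in $(x, e^{x}]$ not dividing $rs$. By the heuristic of Section \ref{sec_heur}, the rational points $(a+r/s)/q = (as+r)/(sq)$ with $q \mid \Pi$ all lie in the lattice $\frac{1}{s\Pi}\mathbb{Z}$. Write such a point as $b/(s\Pi)$ with $b = (as+r)\Pi/q$. Then $b \equiv r\Pi/q \pmod s$, and we need $a = (bq/\Pi - r)/s$ to be coprime to $q$. Since $p\nmid s$, the condition $\gcd(a,q)=1$ is equivalent to $p \nmid bq/\Pi - r$ for every $p \mid q$. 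Because $p\mid q$ and $p \mid \Pi$, the number $bq/\Pi$ is $b\cdot(q/\Pi)$, where $\Pi/q$ is coprime to $p$. So the condition says that $b \not\equiv r\,\Pi/q \pmod p$ for each $p \mid q$.

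For a point $b$, let $S_b$ be the set of primes $p\in P$ with $b \equiv 0 \pmod p$; these are the primes where $p \nmid \Pi/q$ can fail, i.e. $q$ must satisfy $\Pi/q \mid b$ up to the factor $s$. The representations of $b$ correspond to $d = \Pi/q$ dividing $\gcd(b,\Pi)$. For each prime $p \mid q = \Pi/d$ we have $p \nmid b$, so the coprimality condition is nontrivial. The plan is to use this to restrict $\psi$ to a carefully chosen set of $q$ (for instance $q = \Pi/d$ with $\omega(d)$ unusually large, around $K\sum_{p\in P}1/p$ with $K$ large). On such $q$ the admissible $b$ have many representations, which makes the union small, while weights $\psi(q) = \varepsilon' q/\Pi$ still give a large sum $\sum \psi(q)\varphi(q)/q$. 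The key count will be the following. The measure of the union is at most $(2\varepsilon'/(s\Pi)) \cdot \#\{b \bmod s\Pi : b \text{ admissible for some chosen } q\}$. The sum of measures is $(2\varepsilon'/(s\Pi))\sum_q \#\{\text{admissible } b \text{ for } q\}$. So it suffices to show that admissible $b$ are hit on average by $\gg 1/\varepsilon$ chosen $q$.

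Concretely, I would choose $\mathcal{D} = \{d \mid \Pi : \omega(d) = k\}$ with $k \approx K\log\log$-scale, $q = \Pi/d$. Counting via CRT, an admissible $b$ for $q = \Pi/d$ has $d \mid b$ and avoids one residue mod each $p \mid q$. The number of such $b$ mod $\Pi$ is $\prod_{p\mid q}(p-1) = \varphi(q)$, so the sum side is $\varepsilon'\sum_{d\in\mathcal D}\varphi(\Pi/d)/\Pi \approx \varepsilon' \sum_{d}\frac{1}{\varphi(d)}\prod_{p\in P}(1-1/p)$. The union side counts $b$ with $\omega(\gcd(b,\Pi)) \ge k$: by CRT its density is $\sum_{j\ge k}$ of Poisson-like terms $e^{-\lambda}\lambda^j/j!$ with $\lambda = \sum 1/p$. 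The sum side is essentially $e^{-\lambda}\lambda^k/k!\cdot\varepsilon'$ times a harmless factor, so the ratio union/sum tends to $\varepsilon'\cdot$(tail)/(term). Choosing $k$ with $k/\lambda$ slightly bigger than $1$ makes the tail comparable to the term, which gives nothing; I therefore expect to need $\varepsilon'$ scaling as $\lambda^{-k}k!e^{\lambda}$. This normalises the sum to $\ge 1/3$ while the union is $\varepsilon'\cdot e^{-\lambda}\lambda^k/k!\cdot(1-\lambda/k)^{-1}$. Comparing, with $\lambda\to\infty$ as $x\to\infty$ and $\prod(1-1/p)$ corrections, the gain comes from the product $\prod_{p\mid d}(1-1/p)^{-1}$ versus the admissibility loss $\prod_{p\mid q}(1-1/p)$. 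Getting these constants to yield a ratio tending to $0$ is the main obstacle, and I would try optimizing $k$ (taking $k$ a bit smaller than $\lambda$ is not allowed; taking $\mathcal D$ restricted to large prime factors might be). The constraint $\psi(q)\le 1/2$ holds since $q/\Pi$ is small.

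For irrational $\gamma$ take $r/s$ with $|\gamma - r/s| < \exp(-s^{12})$. Every $q\le Y$ used has $Y \le s\,e^{e^{x}}$-size, and we choose $x$ as a small power of $\log s$ so that $|(a+\gamma)/q - (a+r/s)/q| \le \exp(-s^{12})$ is negligible versus $\psi(q)/q$. Enlarging $\psi$ by a factor $2$ then transfers the bounds. Finally, $X$ large is arranged by taking $x$ large (all $q\ge \Pi/d$ are huge).
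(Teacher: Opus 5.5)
Your set-up is sound as far as it goes. For $q\mid\Pi$ all centres lie in $\frac{1}{s\Pi}\mathbb{Z}$, admissibility at $p\mid q$ is $b\not\equiv r\Pi/q\pmod p$, and it suffices to show that admissible $b$ have large average multiplicity. (Note that $p\mid q$ does \emph{not} force $p\nmid b$: if $p\mid b$, the condition at $p$ holds automatically because $p\nmid r\Pi/q$. This is why one $b$ can be admissible for many $e=\Pi/q$ dividing $\gcd(b,\Pi)$.)

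However, the proposal stops exactly where the proof happens. You bound the union by all $b$ with $\omega(\gcd(b,\Pi))\ge k$, discarding admissibility, and this can never succeed. The number of $b\bmod\Pi$ with $\gcd(b,\Pi)=d$ is $\varphi(\Pi/d)$, which is precisely the number of $b$ admissible for $q=\Pi/d$. So this bound always dominates the sum, whatever $k$ and $\varepsilon'$ are. The factor $\prod_{p\mid d}(1-1/p)^{-1}$ you hope to gain from is $1+o(1)$, since all primes exceed $x$.

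The missing idea is to retain admissibility in the union count. Fix $d=\gcd(b,\Pi)$, write $b=db'$, and union-bound over $e\mid d$. The conditions $\gcd(b',\Pi/d)=1$ and $b'd/e\not\equiv r\pmod p$ exclude two \emph{distinct} residue classes modulo each $p\mid\Pi/d$; they are distinct exactly because $p\nmid r$, which is where $\gamma\neq0$ enters. By the CRT, each $e$ then contributes $\prod_{p\mid\Pi/d}(p-2)\approx\varphi(\Pi/d)e^{-\Sigma}$. This beats the $2^{\omega(d)}$ choices of $e$ when $\omega(d)<c\Sigma$ with $c\log 2<1$. The remaining $b$ with $\omega(\gcd(b,\Pi))\ge c\Sigma$, $c>1$, are rare by Rankin's trick $\mathbf{1}_{\omega\ge c\Sigma}\le e^{\alpha(\omega-c\Sigma)}$. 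With this split one can take all $q\mid\Pi$ (for $\gamma=1$); no restriction to $\omega(\Pi/q)$ large is needed.

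The passage to irrational $\gamma$ is also not justified. You treat $s$ as fixed in the rational case, but under \eqref{dioph_cond} you must take $s=s_n\to\infty$, and you give no estimate uniform in $s$. Admissible $b$ must also satisfy $b\equiv r\Pi/q\pmod s$, which dilutes multiplicities. The natural count, as in the paper's bound for $b$ with many prime factors, loses a factor $s$, and there the saving $e^{-\eta\Sigma}$ absorbs it only because $s\le e^{\rho\Sigma}$.

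Your choice of $x$ as a small power of $\log s$ comes from the crude lower bound $\psi(q)\gtrsim 1/Y$. It gives $\Sigma\approx\log x=O(\log\log s)$, far too small to beat any such loss. The paper instead fixes $\Sigma$ with $s\asymp e^{\rho\Sigma}$ and restricts $\psi$ to $q$ with $\omega(\Pi/q)<c\Sigma$. Then $\psi(q)\ge\exp(-c\Sigma\log p_k)=\exp(-s^{1/\rho+o(1)})$ still exceeds $|\gamma-r/s|$. The competing constraints $e^\alpha-1-c\alpha+\rho<0$ and $c\log 2<1$ force $\rho<0.086$, which is where the exponent $12$ comes from. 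Your restriction $\omega(\Pi/q)=k$ would give a comparable lower bound for $\psi(q)$, but you would have to use it and make every estimate explicit in $s$.
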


\begin{proof}[Proof of Theorem \ref{thm_counter} assuming Lemma \ref{obv_key_lem}]
    This is a routine argument. By an application of Lemma \ref{obv_key_lem}, we obtain 
    sequences $(X_k)_k,(Y_k)_k,(\psi_{X_k})_k$ such that:

    \begin{itemize}
        \item[(i)] $X_1 < Y_1 < X_2 < Y_2 \ldots$,
        \item[(ii)] \[\sum_{q \in [X_k,Y_k]} \frac{\psi_{X_k}(q)\varphi(q)}{q} \geq \frac{1}{3},\]
       \item[(iii)] 
\[\lambda\left(\bigcup_{q \in [X_k,Y_k]}A_q(\psi_{X_k},\gamma)\right) < 2^{-k}.\]
    \end{itemize}
    Next, we define $\psi = \sum_{k \in \mathbb{N}} \psi_{X_k}$ and observe that by (i), for every $q \in \mathbb{N}$, we have $\psi(q) = \psi_{X_k}(q)$ for some $k$. By (ii), we have
    \[\sum_{q \in \mathbb{N}} \frac{\psi(q)\varphi(q)}{q} = \infty,\]
    so it suffices to show that
    $\lambda(W'(\psi,\gamma)) = 0$. Note that
    \[W'(\psi,\gamma) = \limsup_{q \to \infty } A_q(\psi,\gamma)
    \subseteq \bigcap_{k \in \mathbb{N}} \bigcup_{\ell \geq k} \bigcup_{q \in [X_{\ell},Y_{\ell}]}A_q(\psi_{X_{\ell}},\gamma).
    \]
    Thus by (iii),
    \[\lambda(W'(\psi,\gamma)) \leq \lim_{k \to \infty} \sum_{\ell \geq k} \lambda\left(\bigcup_{q \in [X_{\ell},Y_{\ell}]}A_q(\psi_{X_{\ell}},\gamma)\right) \leq \lim_{k \to \infty} \sum_{\ell \geq k} 2^{-{\ell}} =  0.\]
\end{proof}

It remains to provide a proof for Lemma \ref{obv_key_lem}.
We first provide a particularly transparent proof for the special case of $\gamma = 1$. We will then provide, with the same mechanism underlying, a slightly more technical argument to extend the counterexamples to all $\gamma$ that satisfy the Diophantine condition \eqref{dioph_cond}.\\

\subsection*{The proof for $\gamma = 1$}
We fix $\varepsilon > 0$ and let $X = X(\varepsilon)$ be chosen later. We fix consecutive primes $X < p_1<\dots <p_k$ and set 
\[
\Pi_X := \prod_{i = 1}^k p_i\qquad \text{and}\qquad\Sigma_X := \sum_{i = 1}^k \frac{1}{p_i}.
\]
As $k\rightarrow\infty$ we have $\Sigma_X\rightarrow\infty$, so by choosing $k$ sufficiently large in terms of $X$ and $\varepsilon$ we may assume that also $\Sigma_X$ is sufficiently large in terms of $\varepsilon$ and $X$.
We then define
\begin{align*}
\psi_X(q) &:= \begin{cases}
     \frac{q}{2\Pi_X} &\text{ if } q \mid \Pi_X, \,q > 1,\\
    0 &\text{ otherwise,}
\end{cases}\\
Y &:= \Pi_X.\end{align*}
We see that $\psi_X$ is supported on $[X,Y]$. Moreover, we have that 
\[\sum_{q \in [X,Y]} \frac{\psi_X(q)\varphi(q)}{q} 
= \frac{1}{2\Pi_X}\sum_{q \mid \Pi_X}\varphi(q) - \frac{1}{2\Pi_X} = \frac{1}{2}-\frac{1}{2\Pi_X}\geq \frac{1}{3}.
\]
Thus we are left to establish a suitable upper bound for $\lambda(\bigcup_q A_q(\psi_X,1))$. For readability, we now drop the subscripts $X$ from $\Pi_X,\Sigma_X$ and $\psi_X$. We note that for any $q \in \supp \psi$, we have $\frac{\psi(q)}{q} = \frac{1}{2\Pi}$,
and consequently, 
\[A_q(\psi,1) = \bigcup_{\gcd(a,q) = 1}E_{a,q},\]
where
\[
E_{a,q}:=\Bigl[\frac{a+1}{q}-\frac{1}{2\Pi},\frac{a+1}{q}+\frac{1}{2\Pi}\Bigr].
\]

Since $q\mid \Pi$, all $E_{a,q}$ are intervals around rationals of the form $\frac{b}{\Pi}, b \in \mathbb{Z}$. Thus

\begin{align*}
\lambda\Bigl(\bigcup_{q \in [X,Y]}A_q(\psi,1)\Bigr)
&\subseteq 
\lambda\Bigl(\bigcup_{q\mid \Pi}\bigcup_{\substack{\gcd(a,q)=1}}E_{a,q}\Bigr)\\
&=\lambda\Bigl(\bigcup_{\substack{0 \leq b < \Pi\\ \frac{b}{\Pi}=\frac{a+1}{q}\text{ for some $\gcd(a,q)=1$}}}\Bigl[\frac{b}{\Pi}-\frac{1}{2\Pi},\frac{b}{\Pi}+\frac{1}{2\Pi}\Bigr]\Bigr)\\
&=\frac{1}{\Pi}\sum_{\substack{0 \leq b < \Pi\\ \frac{b}{\Pi}=\frac{a+1}{q}\text{ for some $\gcd(a,q)=1$}}}1.
\end{align*}
We split the sum into two subsums according to whether $\sum_{p \mid \gcd(\Pi,b)}1 < 1.01\Sigma$ or not: We define
\begin{align*}
S_1&:=\sum_{\substack{0 \leq b < \Pi\\ \frac{b}{\Pi}=\frac{a+1}{q}\text{ for some $\gcd(a,q)=1$}\\ \sum_{p \mid \gcd(\Pi,b)}1 < 1.01\Sigma}}1,\\
S_2&:=\sum_{\substack{0 \leq b < \Pi\\ \frac{b}{\Pi}=\frac{a+1}{q}\text{ for some $\gcd(a,q)=1$}\\ \sum_{p \mid \gcd(\Pi,b)}1 \geq 1.01\Sigma}}1.
\end{align*}
This gives
\[\lambda\left(\bigcup_{q \in \supp\psi}A_q(\psi,1)\right)\leq \frac{1}{\Pi} (S_1+S_2),\]
and so it suffices to show $\max\{S_1,S_2\} \leq \Pi\varepsilon/2$.\\

For $S_2$, we drop the sieve condition, and bound it using the fact that it is rare for $b$ to have so many prime factors.
For $\alpha>0$ we have
\[
\mathds{1}_{\left\{\sum_{p \mid \gcd(\Pi,b)}1 \geq 1.01\Sigma\right\}}
\leq \exp\Bigl(\alpha\Bigl(\sum_{p \mid \gcd(\Pi,b)}1 - 1.01\Sigma\Bigr)\Bigr).
\]
Therefore, applying this bound and rearranging the sum according to $d=\gcd(\Pi,b)$, we obtain
\begin{align*}S_2 &\leq \sum_{\substack{0 \leq b < \Pi}}\exp\Bigl(\alpha\Bigl(\sum_{p \mid \gcd(\Pi,b)}1 - 1.01\Sigma\Bigr)\Bigr)\\
&= \sum_{\substack{d \mid \Pi}}
\exp\Bigl(\alpha\Bigl(\sum_{p \mid d}1 - 1.01\Sigma\Bigr)\Bigr)
\sum_{\substack{0 \leq b < \Pi\\\gcd(\Pi,b) = d}}1\\
&= \varphi(\Pi)\exp\Bigl( - 1.01\alpha\Sigma\Bigr)\sum_{\substack{d \mid \Pi}}
 \frac{\exp\Bigl(\alpha\sum_{p \mid d}1\Bigr)}{\varphi(d)}.
\end{align*}
We note that $\varphi(\Pi)\ll \Pi \exp(-\Sigma)$ and that $1/\varphi(d)\ll \prod_{p|d}(1+1/p)/p$. Thus we obtain the bound
\begin{align*}S_2 
&\ll \Pi \exp((-1 -1.01\alpha)\Sigma)
\sum_{d \mid \Pi} \prod_{p \mid d}\frac{e^{\alpha}\left(1 + \frac{1}{p}\right)}{p}\\
&= \Pi \exp((-1 -1.01\alpha)\Sigma)
\prod_{p \mid \Pi} \left(1 + \frac{e^{\alpha}(1+1/p)}{p}\right)\\
&\ll \Pi \exp((e^{\alpha}-1 -1.01\alpha)\Sigma).
\end{align*}
For $\alpha = \frac{1}{100}$, we have 
$e^{\alpha}-1 -1.01\alpha <- 10^{-6} < 0$, thus 
$S_2 \ll \Pi \exp(-10^{-6}\Sigma)$.\\

Thus we are left to provide upper bounds for $S_1$. We note that the condition `$\frac{b}{\Pi}=\frac{a+1}{q}$ for some $\gcd(a,q)=1$' is equivalent to (setting $e = \frac{\Pi}{q}$)
\[
\exists e \mid \gcd(b,\Pi):\, \gcd\Bigl(\tfrac{b}{e}-1,\tfrac{\Pi}{e}\Bigr)=1.
\]
By the union bound on $e$, we have
\[
S_1=\sum_{\substack{0 \leq b < \Pi \\ \ \exists e \mid \gcd(b,\Pi):\, \gcd\left(\tfrac{b}{e}-1,\tfrac{\Pi}{e}\right)=1\\ \sum_{p \mid \gcd(b,\Pi)}1 < 1.01\Sigma}}1\le \sum_{\substack{0 \leq b < \Pi\\ \sum_{p \mid \gcd(b,\Pi)}1 < 1.01\Sigma}}\sum_{\substack{e \mid \gcd(b,\Pi)\\ \gcd\left(\frac{b}{e}-1,\frac{\Pi}{e}\right)=1}}1.
\]
We let $d=\gcd(b,\Pi)$ and $b'=b/d$. Rearranging the sum then gives
\begin{align*}
S_1 &\leq  \sum_{\substack{d \mid \Pi\\ \omega(d) < 1.01\Sigma}} \sum_{e \mid d}\sum_{\substack{0 \leq b' < \frac{\Pi}{d}\\ \gcd\left(b'\frac{d}{e}-1,\frac{\Pi}{e}\right)=1\\ \gcd\left(b',\frac{\Pi}{d}\right)=1}}1\\
&\leq \sum_{\substack{d \mid \Pi\\ \omega(d) < 1.01\Sigma}} \sum_{e \mid d}\sum_{\substack{0 \leq b' < \frac{\Pi}{d}\\ \gcd\left(b'\frac{d}{e}-1,\frac{\Pi}{d}\right)=1\\ \gcd\left(b',\frac{\Pi}{d}\right)=1}}1.
\end{align*}
Recalling that $\Pi$ is square-free, for every prime $p \mid \frac{\Pi}{d}$, we have $p \nmid d$, thus $p \nmid \frac{d}{e}$. Consequently, the sieve conditions from the inner sum correspond to avoiding the \textit{distinct} residue classes $0$ and $\left(\frac{d}{e}\right)^{-1} \pmod{p}$ for all $p \mid \frac{\Pi}{d}$.
We remark that this is the crucial part where the proof for $\gamma = 0$ would break down, since there, the two sieve conditions correspond to the same residue class.
Since the sum over $b'$ runs through a complete set of residue classes modulo $\frac{\Pi}{d}$, the Chinese Remainder Theorem implies 

\[\sum_{\substack{0 \leq b' < \frac{\Pi}{d}\\ \gcd\left(b'\frac{d}{e}-1,\frac{\Pi}{d}\right)=1\\ \gcd\left(b',\frac{\Pi}{d}\right)=1}}1 = \prod_{p \mid \frac{\Pi}{d}} (p-2)
\ll \frac{\Pi}{d} \exp(-2 \Sigma) \frac{d^2}{\varphi(d)^2}.
\]
Further, since 
$\omega(d) < 1.01 \Sigma$, we have

\[\sum_{e \mid d}1 \leq 2^{1.01 \Sigma} = \exp(1.01 \cdot \log 2 \cdot \Sigma).\]
Putting in these estimates in the above, we obtain

\[\begin{split}S_1 &\ll \Pi \exp((1.01 \cdot \log 2 -2) \cdot \Sigma)
\sum_{d \mid \Pi} \frac{d}{\varphi(d)^2}
\\&= \Pi \exp((1.01 \cdot \log 2 -2) \cdot \Sigma)\prod_{p \mid \Pi}\left(1 + \frac{p}{(p-1)^2}\right) \\&\ll \Pi \exp((1.01 \cdot \log 2 -1) \cdot \Sigma).
\end{split}\]
Since $1.01 \cdot \log 2 -1 <  - 10^{-6}$, we obtain 

\[\max\{S_1,S_2\} \ll \Pi \exp(-10^{-6} \Sigma).\]

We recall that since $k$ is chosen sufficiently large in terms of $X$ and $\varepsilon$, we can make $\Sigma$ arbitrarily large in terms of $\varepsilon$. Thus, for a suitable choice of $k$ this shows $\max\{S_1,S_2\} \leq \frac{\Pi\varepsilon}{2}$, as desired. This concludes the proof of Lemma \ref{obv_key_lem} for $\gamma = 1$.\\

\subsection*{The general case}
% In order to obtain Lemma \ref{obv_key_lem} for all $\gamma \neq 0$ satisfying 
% \eqref{dioph_cond}, we follow the above, but make some necessary changes: 
% We replace the value of $1.01$ by a variable $c > 1$. Following the lines above, we get
% \[\max\{S_1,S_2\} \ll 
% \Pi\left(\exp\left(\max\{e^{\alpha} - 1 - c\alpha,c \log 2 - 1\}\right)\Sigma\right).
% \]
% Minimizing the right-hand side in $c,\alpha$ yields 
% $c = \frac{e}{2}, \alpha = 1 - \log 2$, in which case we obtain
% \[\max\{S_1,S_2\} \ll 
% \Pi \exp\left((\frac{e}{2}\log 2 - 1)\Sigma\right) \ll \Pi \exp(- 0.05 \Sigma).\]

We may assume the sequence of $\frac{r_n}{s_n}$ with \eqref{dioph_cond} to be as sparse as we may. We remark that we do not assume $\frac{r_n}{s_n}$ to be reduced, thus this case covers both irrationals and rationals.
Let us fix $\varepsilon$ and $X$ as before,  and fix constants $\rho > \frac{1}{12}$ and $c>1$ (to be optimised later). We write $\frac{r}{s} = \frac{r_n}{s_n} \neq 0$ with $n$ chosen sufficiently large in terms of $X,\varepsilon,\rho,c$, and let $X < p_1 < p_2 < \ldots$ denote the primes not dividing $rs$. We then choose $k$ minimal such that
\[s \leq \exp\left(\rho\sum_{i = 1}^k \frac{1}{p_i}\right).\]
Writing $\Sigma := \sum_{i = 1}^k \frac{1}{p_i}$, minimality gives, for $n$ sufficiently large,
\begin{equation}\label{asymp_s}
\frac{1}{2}\exp\left(\rho\Sigma\right) \leq s \leq \exp\left(\rho\Sigma\right).
\end{equation}
Again, let $\Pi := \prod_{i = 1}^k p_i$ and $Y := \Pi$. We define
\[\psi(q) = \begin{cases}
    \frac{q}{2\Pi} &\text{ if } q \mid \Pi, \,\omega\left(\frac{\Pi}{q}\right) < c\Sigma,\, q > 1,\\
    0 &\text{ otherwise}.
\end{cases}\]
Clearly, this shows that we have $\supp\psi\subseteq[X,Y]$.
We observe
\[\sum_{q \in \mathbb{N}} \frac{\varphi(q)\psi(q)}{q}
= \frac{1}{2\Pi}\sum_{q \mid \Pi} \varphi(q) - 
\frac{1}{2\Pi}\sum_{\substack{q \mid \Pi\\
\omega\left(\frac{\Pi}{q}\right) \geq c\Sigma
}} \varphi(q).
\]
By using the same bound that was used on $S_2$ in the case $\gamma=1$, we obtain for any choice of $\alpha>0$

\[\frac{1}{\Pi}\sum_{\substack{q \mid \Pi\\
\omega\left(\frac{\Pi}{q}\right) \geq c\Sigma
}} \varphi(q) \leq \exp( (e^\alpha-1-c\alpha)\Sigma).\]
Choosing $\alpha=\log{c}$ then shows that, provided $s$ is large enough in terms of the fixed constants $\rho,c$, this is less than $1/10$. Since $s=s_n$, this immediately implies 
that for a sufficiently large choice of $n$ we have
\[
\sum_{q \in \mathbb{N}} \frac{\varphi(q)\psi(q)}{q} \geq \frac{1}{3}.
\]
This gives the first claim of Lemma \ref{obv_key_lem}.

We now proceed to bound $\lambda(\bigcup_qA_q(\psi,\gamma))$. First we show that \eqref{dioph_cond} allows us to replace $\gamma$ with $\frac{r}{s}$ at the cost of an acceptable error term. Note that $X$ is fixed and we have $\lvert r\rvert \asymp s$, so for $n$ sufficiently large in terms of $X$ we have
\[\sum_{p \mid rs} \frac{1}{p} \leq \omega(\lvert rs\rvert) \ll \frac{\log s}{\log \log s} = o(\Sigma),\]
where here (and throughout this proof) $o(1)$ is understood as $n\rightarrow\infty$ for fixed $c,\rho,X,\varepsilon$. Hence, by Mertens' estimate,
\[\Sigma = (1+o(1))\log \log k+O(\log\log{X}).\]
Thus, using \eqref{asymp_s} and $k$ being sufficiently large in terms of $X$, we see that
\[k = \exp(s^{\rho^{-1}+o(1)}).\]
Recall that $q \in \supp(\psi)$ implies $\omega\left(\frac{\Pi}{q}\right) < c\Sigma$, and thus
$\frac{\Pi}{q} \leq p_k^{c \Sigma}$.
Hence for $q \in \supp(\psi)$,

\[\psi(q) 
\geq \frac{1}{2}\exp\left(-c\Sigma \log p_k\right)
= \exp\left(-s^{\rho^{-1}+o(1)}\right)
\geq \exp\left(-s^{12}\right),
\]
provided $n$ is sufficiently large. By \eqref{dioph_cond},
\[\left\lvert\frac{r}{s} - \gamma \right\rvert < \exp\left(-s^{12}\right) \leq \psi(q).\]
This allows us to write
\[E_{a,q}(\gamma) \subseteq E_{a,q}^{(2)}\left(\frac{r}{s}\right),\]
where $E_{a,q}^{(2)}\left(\gamma\right)$ denotes the interval around $\frac{a + \gamma}{q}$ with radius $\frac{2\psi(q)}{q}$. Consequently, we may upper-bound
\[\bigcup_{q \in \supp \psi} A_q(\psi,\gamma)
\subseteq \bigcup_{q \mid \Pi} A_q\left(2\psi,\frac{r}{s}\right).
\]
We see that $A_q\left(2\psi,\frac{r}{s}\right)$ has its centers at
$\frac{a + \frac{r}{s}}{q} = \frac{sa+r}{sq}$, thus are in particular of the form
$\frac{b}{s\Pi}$.
Again, we split into two subsums according to whether $\sum_{p \mid \gcd(s\Pi,b)}1 < c\Sigma$ or not: We define

\begin{align*}
S_1&:=\sum_{\substack{0 \leq b < s\Pi\\ \frac{b}{s\Pi}=\frac{as+r}{sq}\ \text{for some $\gcd(a,q)=1$}\\ \sum_{p \mid \gcd(b,s\Pi)}1 < c\Sigma}}1,\\
S_2&:=\sum_{\substack{0 \leq b < s\Pi\\ \frac{b}{s\Pi}=\frac{as+r}{sq}\ \text{for some $\gcd(a,q)=1$}\\ \sum_{p \mid \gcd(b,s\Pi)}1 \geq c\Sigma}}1,\\
\end{align*}
and obtain
\[
\lambda\left(\bigcup_{q \in [X,Y]}A_q(\psi,\gamma)\right)\ll \frac{1}{\Pi}(S_1+S_2).
\]
We recall that
$\omega(s) = o(\Sigma)$, thus the contribution of primes $p \mid s$ in the above conditions is negligible.\\

For $S_2$, we drop both the sieve condition $\gcd(a,q)=1$ again as well as the congruence condition. We thus obtain, following the proof for $\gamma =1$, that for any choice of $\alpha>0$

\[\begin{split}S_2 &\ll \sum_{\substack{0 \leq b < s\Pi \\ \sum_{p \mid \gcd(s\Pi,b)}1 \geq c\Sigma}}\exp\Bigl(\alpha\Bigl(\sum_{p \mid \gcd(s\Pi,b)}1 - c\Sigma\Bigr)\Bigr) \ll s\Pi \exp((e^{\alpha}-1 -c\alpha + o(1))\Sigma).\end{split}\]

We are left to provide upper bounds for $S_1$, again following the proof strategy for the case $\gamma = 1$. We note that (writing $e=\frac{\Pi}{q}$) the condition `$\frac{b}{s\Pi}=\frac{sa+r}{sq}$ for some $\gcd(a,q)=1$' is equivalent to
\[
\exists e \mid \gcd(b,\Pi):\, \gcd\Bigl(\frac{b}{e}-r,\frac{\Pi}{e}\Bigr)=1, \quad \frac{b}{e} \equiv r \pmod s.
\]
Thus we see that by the union bound on $e$,
\begin{align*}
S_1&\leq\sum_{\substack{0 \leq b < s\Pi\\ \ \exists e \mid \gcd(b,\Pi): \gcd\left(\frac{b}{e}-r,\frac{\Pi}{e}\right)=1, \ \frac{b}{e}\equiv r \pmod{s}\\ \sum_{p \mid \gcd(b,\Pi)}1 < c\Sigma}}1\\
&\le 
\sum_{\substack{d \mid \Pi\\ \omega(d) < c\Sigma}}
\sum_{e \mid d} \sum_{\substack{0 \leq b' < \frac{s\Pi}{d}\\ \gcd\left(b'\frac{d}{e}-r,\frac{\Pi}{e}\right)=1\\ \gcd\left(b',\frac{\Pi}{d}\right)=1\\ {\frac{b'd}{e}\equiv r \pmod{s}}}}1
\\&\le 
\sum_{\substack{d \mid \Pi\\ \omega(d) < c\Sigma}}
\sum_{e \mid d} \sum_{\substack{0 \leq b' < \frac{s\Pi}{d}\\ \gcd\left(b'\frac{d}{e}-r,\frac{\Pi}{d}\right)=1\\ \gcd\left(b',\frac{\Pi}{d}\right)=1\\ {\frac{b'd}{e}\equiv r \pmod{s}}}}1.
\end{align*}
Since $rs$ is coprime to $\Pi$, the Chinese Remainder Theorem implies as before
\[\sum_{\substack{0 \leq b' < \frac{s\Pi}{d}\\ \gcd\left(b'\frac{d}{e}-r,\frac{\Pi}{d}\right)=1\\ \gcd\left(b',\frac{\Pi}{d}\right)=1\\ \frac{b'd}{e}\equiv r \pmod{s}}}1
= \prod_{p \mid \frac{\Pi}{d}}(p-2)
\ll \frac{\Pi}{d} \exp(-2\Sigma) \frac{d^2}{\varphi(d)^2},\]
and concluding as in the case of $\gamma = 1$, we obtain

\[S_1 \ll  \Pi \exp((c \cdot \log 2 -1) \cdot \Sigma),\]
which shows
\[\begin{split}\max\{S_1,S_2\} &\ll  \Pi \max\{s \exp((e^{\alpha}-1 -c\alpha+o(1))\Sigma), \exp((c \cdot \log 2 -1)\Sigma)\}
\\&\ll \Pi\max\{\exp((e^{\alpha}-1-c\alpha+\rho+o(1))\Sigma), \exp((c \cdot \log 2-1)\Sigma)\},\end{split}\]
where we used in the last line $s \ll \exp(\rho \Sigma)$.\\

We now want to choose $c>1,\alpha>0,\rho>1/12$ such that $e^\alpha-1-c\alpha+\rho<0$ and $c\log{2}-1<0$. An elementary optimization shows that the supremal admissible value of $\rho$ is 
\[
\frac{-1+\log{2}-\log{\log{2}}}{\log{2}}=0.086\ldots>\frac{1}{12},
\]
occurring when $c\approx 1/\log{2}$ and $\alpha\approx -\log{\log{2}}$. Hence we may fix $\rho > \frac{1}{12}$, $c>1$ and $\alpha>0$ such that $e^{\alpha}-1-c\alpha+\rho<0$ and $c\log 2-1<0$. Absorbing the $o(1)$ term, there exists some fixed $\eta>0$ for which
\[
\max\{S_1,S_2\}\ll\Pi\exp(-\eta\Sigma)
\]
whenever $n$ is large enough in terms of $X$. Choosing $n$ such that $\Sigma$ is sufficiently large in terms of $\varepsilon$ concludes the proof.

\section{AI declaration}

This work was done without the use of artificial intelligence tools; it is solely the work of the authors.
    \bibliographystyle{plain}

\bibliography{bibliography.bib}
\Addresses

\end{document}